\theoremstyle{plain}
\newtheorem{theorem}{Theorem}[section]
\newtheorem{lemma}[theorem]{Lemma}
\newtheorem{proposition}[theorem]{Proposition}
\newtheorem{remark}[theorem]{Remark}
\newtheorem*{theorem-no-num}{Theorem}
\newtheorem*{proposition-no-num}{Proposition}
\newtheorem*{corollary-no-num}{Corollary}
\DeclareFontFamily{U}{wncy}{}
\DeclareFontShape{U}{wncy}{m}{n}{<->wncyr10}{}
\DeclareSymbolFont{mcy}{U}{wncy}{m}{n}
\DeclareMathSymbol{\Sha}{\mathord}{mcy}{"58}
\begin{document}
	\title{On the convergence of certain indefinite theta series}
	\author{Xiaoyu Zhang}
	\address{Universit\"{a}t Duisburg-Essen,
		Fakult\"{a}t f\"{u}r Mathematik,
		Mathematikcarr\'{e}e
		Thea-Leymann-Straße 9,
		45127 Essen,
		Germany}
	\email{xiaoyu.zhang@uni-due.de}
	\subjclass[2010]{11F03,11F27}
	\maketitle

\begin{abstract}
	We give an elementary proof of the convergence of indefinite theta series associated to an inner space of signature $(n,2)$ conjectured in the work of Alexandrov,Banerjee,Manschot and Pioline (\cite[C]{AlexandrovBanerjeeManschotPioline2018b}) and show that the incidence conditions in \textit{loc.cit} are also necessary for the convergence.
\end{abstract}

\tableofcontents

\section{Introduction}
Theta series are one of the most fundamental examples in the theory of modular forms. The theta series associated to a lattice $(L,Q)$ with integer valued positive definite quadratic form $Q$ is classically known to converge absolutely termwise and define a holomorphic modular form:
\[
\theta(\tau)
=
\sum_{x\in L}
q^{Q(x)},
\quad
q=
e^{-2i\pi\tau}
\text{ and }
\tau=u+vi\in\mathbb{C}
\text{ with }
v>0.
\]
Write $(-,-)$ for the symmetric bilinear form associated to $Q$ such that $Q(x)=\frac{1}{2}(x,x)$. When the quadratic space is indefinite, the theta series is no longer absolutely convergent (because the isometry group of $(L,Q)$ is infinite) and there are several different approaches to take care of the convergence problem. One approach is to sum over a subset of $L$. In this case the theta series is holomorphic but generally no longer a modular form. However, in some situations the series can be completed to a non-holomorphic modular form. This direction of research began with the work of G\"{o}ttsche and Zagier \cite{GottscheZagier1998} and Zwegers (\cite{Zwegers2002}) for $V$ of signature $(n,1)$: fix two negative vectors $C_1,C_2$ in $V$ (that is, $(C_1,C_1),(C_2,C_2)$ are negative) such that $(C_1,C_2)<0$ and take $\mu\in L^\vee$ (the dual of $L$), then put
\[
\theta_{\mu}(\tau;\{C_1,C_2\})
=
\sum_{x\in\mu+L}
\Phi(x;\{C_1,C_2\})q^{(x,x)/2}
\]
where $\Phi(x;\{C_1,C_2\})
=
\mathrm{sgn}((x,C_1))-\mathrm{sgn}((x,C_2))$.  Then Zwegers shows that this theta series is termwise absolutely convergent and that $\theta_{\mu}$ be completed to a modular form using certain complementary error function (we refer the reader to\cite{Zagier2010} for an overview).
In \cite{AlexandrovBanerjeeManschotPioline2018a}, Alexandrov, Banerjee, Manschot and Pioline established an analogue of $\Phi(x;\{C_1,C_2\})$ for $V$ of signature $(n,2)$ and later on, in \cite{AlexandrovBanerjeeManschotPioline2018b}, based on motivations from physics, they consider a variant of $\Phi(x;\{C_1,C_2\})$ for $V$ of signature $(n,2)$ and conjecture the absolute convergence of the corresponding theta series. This is what we will consider in this note.

More precisely, take $N$ non-zero vectors $\mathcal{C}=\{C_1,\cdots,C_N\}$ in $V$ and consider the following incidence conditions on these vectors:
\begin{align}
	\tag{I.1}
	(C_j,C_j)
	\le0,
	\\
	\tag{I.2}
	(C_j,C_j)(C_{j+1},C_{j+1})-(C_j,C_{j+1})^2
	\begin{cases*}
		>0,
		&
		if
		$(C_j,C_j)(C_{j+1},C_{j+1})>0$,
		\\
		=0,
		&
		if
		$(C_j,C_j)(C_{j+1},C_{j+1})=0$,
	\end{cases*}
	\\
	\tag{I.3}
	\begin{cases*}
		(C_j,C_j)(C_{j-1},C_{j+1})-(C_j,C_{j-1})(C_j,C_{j+1})<0,
		&
		if $(C_j,C_j)<0$,
		\\
		(C_{j-1},C_{j+1})>0,
		&
		if $(C_j,C_j)=0$.
	\end{cases*}
\end{align}
To make formulas in this note more readable, we set in the following $\mathrm{sgn}(v,w)=\mathrm{sgn}((v,w))$ for $v,w\in V$. Then we put (following the notation in \cite{FunkeKudla2022}, up to a sign change)
\[
\mathbf{w}(x;\mathcal{C})
=
\sum_{j=1}^N
\mathrm{sgn}(x,C_j)\mathrm{sgn}(x,C_{j+1}),
\quad
x\in V.
\]
Here by convention, $C_{N+1}=C_1$. Take a negative vector $v\in V$ and set
\[
\Phi(x;\mathcal{C})
=
\mathbf{w}(x;\mathcal{C})
-
\mathbf{w}(v;\mathcal{C}).
\]
We then define
\[
\theta_{\mu}(\tau)=\theta_{\mu}(\tau;\mathcal{C})=
\sum_{x\in\mu+L}
\Phi(x;\mathcal{C})
q^{(x,x)/2}.
\]
The main result of this note is
\begin{theorem}\label{main theorem}
	\begin{enumerate}
		\item
		Suppose $\mathcal{C}$ satisfies (I.1)-(I.3), then $\mathbf{w}(x;\mathcal{C})$ is constant on $V_{<0}$ (the set of negative vectors in $V$) and the indefinite theta series $\theta_{\mu}(\tau)$ is termwise absolutely convergent.

		\item
		Suppose $\mathcal{C}$ satisfies (I.1)-(1.2) and the indefinite theta series $\theta_{\mu}(\tau)$ is termwise absolutely convergent. If moreover no three consecutive vectors in $\mathcal{C}$ are all null-vectors, then $\mathcal{C}$ also satisfies (I.3).
	\end{enumerate}	
\end{theorem}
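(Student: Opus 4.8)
The entire theorem hinges on understanding how $\mathbf{w}(x;\mathcal{C})$ jumps as $x$ crosses the hyperplanes $C_j^\perp=\{x:(x,C_j)=0\}$, since $\mathbf{w}$ is locally constant off $\bigcup_j C_j^\perp$. When $x$ crosses $C_j^\perp$ transversally only $\mathrm{sgn}(x,C_j)$ flips, and the two affected summands change $\mathbf{w}$ by $\pm 2[\mathrm{sgn}(x,C_{j-1})+\mathrm{sgn}(x,C_{j+1})]$; so the crossing is trivial exactly when $(x,C_{j-1})$ and $(x,C_{j+1})$ have opposite signs along $C_j^\perp$. The plan is therefore to establish a Key Lemma: under (I.1)--(I.2), the $C_j$-crossing is trivial on $C_j^\perp\cap V_{<0}$ if and only if (I.3) holds at $j$. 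For $(C_j,C_j)<0$ I would project $C_{j\pm1}$ orthogonally into the Lorentzian space $C_j^\perp$, setting $C_{j\pm1}^{(0)}=C_{j\pm1}-\tfrac{(C_{j\pm1},C_j)}{(C_j,C_j)}C_j$, so that $(x,C_{j\pm1})=(x,C_{j\pm1}^{(0)})$ on $C_j^\perp$; a one-line computation gives $(C_{j-1}^{(0)},C_{j+1}^{(0)})=\big[(C_j,C_j)(C_{j-1},C_{j+1})-(C_j,C_{j-1})(C_j,C_{j+1})\big]/(C_j,C_j)$. Condition (I.2) forces $(C_{j\pm1}^{(0)},C_{j\pm1}^{(0)})\le0$, i.e. both projections are causal in $C_j^\perp$, and (I.3) says precisely that $(C_{j-1}^{(0)},C_{j+1}^{(0)})>0$, i.e. they sit in opposite components of the negative cone of $C_j^\perp$. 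The elementary two-cone dichotomy in Lorentzian signature then yields opposite signs of $(x,C_{j-1}),(x,C_{j+1})$ for every timelike $x\in C_j^\perp$, and conversely.

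For part (1) I would use that $V_{<0}$ is connected in signature $(n,2)$: once the Key Lemma makes every interior crossing trivial, $\mathbf{w}\equiv\mathbf{w}(v)$ on $V_{<0}$, so $\Phi$ vanishes there. To get convergence I would upgrade this to the vanishing of $\Phi$ on an open conic neighbourhood of $\overline{V_{<0}}\setminus\{0\}$: for a boundary null ray off all walls this is immediate from local constancy, while for $\xi$ lying on one or several walls one checks that triviality of the nearby crossings persists by continuity from $C_j^\perp\cap V_{<0}$, forcing every chamber abutting $\xi$ to carry the value $\mathbf{w}(v)$. Granting this, $\mathrm{supp}\,\Phi$ meets the unit sphere in a compact subset of $\{(x,x)>0\}$, whence $(x,x)\ge\varepsilon|x|^2$ on $\mathrm{supp}\,\Phi$ and $\sum_{x\in\mu+L}|\Phi(x)|\,e^{-\pi v(x,x)}$ is dominated by a convergent positive-definite theta series. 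The analysis at multi-wall null directions is the main obstacle here, and it is also where $(C_j,C_j)=0$ enters: there $C_j^\perp$ is degenerate, so I would pass to $C_j^\perp/\langle C_j\rangle$ (or split off a hyperbolic plane) to make sense of the projection, and (I.3) reduces to the direct requirement $(C_{j-1},C_{j+1})>0$.

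For part (2) I would argue the contrapositive. If (I.3) fails at some $j$, the converse half of the Key Lemma produces an open subset of $C_j^\perp\cap V_{<0}$ on which $(x,C_{j-1})$ and $(x,C_{j+1})$ share a sign, so the $C_j$-crossing is nontrivial and $\mathbf{w}$ takes a value other than $\mathbf{w}(v)$ on some chamber $\Omega\subseteq V_{<0}$. Since $\mathbf{w}$ is integer valued, $|\Phi|\ge1$ on the open cone $\Omega$, and $\Omega\cap(\mu+L)$ contains infinitely many $x$ with $(x,x)\to-\infty$; for these $|\Phi(x)|\,e^{-\pi v(x,x)}=|\Phi(x)|\,e^{\pi v|(x,x)|}\to\infty$, so the terms of $\theta_\mu(\tau)$ do not tend to $0$ and the series diverges. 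The delicate point is the converse of the Key Lemma in the null cases, when $(C_j,C_j)=0$ or one of $C_{j\pm1}$ is null and the projection degenerates: here the sign of $(x,C_{j-1})(x,C_{j+1})$ can fail to be read off from the Gram data alone, and this is exactly where the hypothesis that no three consecutive vectors of $\mathcal{C}$ are null is used to exclude the remaining degenerate configurations.
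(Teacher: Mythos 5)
Your outline is essentially the paper's own proof: your ``Key Lemma'' is the paper's Lemma \ref{v_1v_3<0} together with the partial converse established in its Proposition preceding the proof of Theorem \ref{main theorem}(2) (your projection identity is a coordinate-free rewriting of the paper's coordinate computations), and the remaining steps (constancy on $V_{<0}$ by connectedness, reduction of convergence to the chambers whose closure meets $\overline{V}_{<0}$ only at $0$, a uniform lower bound $(x,x)\ge\varepsilon|x|^2$ there, lattice points in a negative cone for part (2)) all appear in the same form in the paper. However, the two points you yourself flag as delicate are genuine gaps, not routine verifications. In part (1), ``persists by continuity'' is not an argument: the function $(x,C_{j-1})(x,C_{j+1})$ is continuous, so its negativity on the timelike part of $C_j^{\perp}$ only yields $(\xi,C_{j-1})(\xi,C_{j+1})\le 0$ at a null boundary direction $\xi\in C_j^{\perp}$, and the equality case is precisely the troublesome one (then $\xi$ lies on further walls); worse, when $C_j$ is null and $\xi$ is parallel to $C_j$, both factors vanish identically and continuity gives nothing. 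The paper closes exactly this gap by redoing the Gram/coordinate computations at null vectors --- cases (2), (3), (4) of its proof of Theorem \ref{main theorem}(1), including the observation that a common codimension-one face of two chambers lying on several walls forces the corresponding $C_{j_i}$ to be parallel and non-adjacent --- and those computations are what your sketch omits.

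More seriously, the converse half of your Key Lemma is false as stated, and the counterexample is \emph{not} excluded by the no-three-consecutive-null-vectors hypothesis, contrary to your closing sentence. In coordinates where the form is $\mathrm{diag}(-1_2,1_n)$, take $C_j=(0,1,0,\cdots,0)$ and $C_{j\pm1}=\pm(1,0,1,0,\cdots,0)$. These are consistent with (I.1) and (I.2), only two of the three vectors are null, and (I.3) fails at $j$ (its left-hand side equals $0$); nevertheless the crossing of $C_j^{\perp}$ is trivial, since $(x,C_{j+1})=-(x,C_{j-1})$ forces $(x,C_{j-1})(x,C_{j+1})=-(x,C_{j-1})^2<0$ for every timelike $x\perp C_j$. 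So for such a configuration your contrapositive produces no chamber $\Omega\subset V_{<0}$ on which $\Phi\ne0$, and your jump bookkeeping also breaks down because $C_{j-1}^{\perp}=C_{j+1}^{\perp}$ are coincident walls, so two signs flip simultaneously. The missing idea is the paper's preliminary reduction in that Proposition: perturb $\mathcal{C}$ so that the $C_i$ become pairwise non-parallel, preserving $\mathbf{w}(v;\cdot)$ and the status of (I.1)--(I.3), and only then run the wall-crossing argument. Without this step, or some substitute that handles antiparallel null vectors and coincident walls, your part (2) does not go through.
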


The value $\mathbf{w}(v;\mathcal{C})$ has the following obvious interpretation: write $v^{\perp}$ for the orthogonal complement of $v$ (a hyperplane in $V$). Then
\begin{enumerate}
	\item
	$(v,C_j)(v,C_{j+1})\ge0$ if and only if the straight segment $\gamma_{j}$ in $V$ connecting $C_j$ to $C_{j+1}$ passes through the hyperplane $v^{\perp}$;

	\item
	$(v,C_j)(v,C_{j+1})<0$ if and only if this segment lies outside this hyperplane.
\end{enumerate}
Write $\gamma(\mathcal{C})$ for the sum $\gamma_{1}+\gamma_{2}+\cdots+\gamma_{N}$, a piecewise smooth loop in $V$. Then the value $\mathbf{w}(v;\mathcal{C})$ measures how many times this loop $\gamma(\mathcal{C})$ passes through the hyperplane $v^{\perp}$, assuming that $(v,C_j)\ne0$ for any $j=1,\cdots,N$. If $\gamma(\mathcal{C})$ passes through $r$ times through $v^{\perp}$, then one has $\mathbf{w}(v;\mathcal{C})=N-4r$.

The first part of the theorem is also proved in \cite{FunkeKudla2022} under the further condition that $(C_j,C_j)<0$ for all $j=1,\cdots,N$. As explained in \textit{loc.cit}, (I.3) means that the $2$-dimensional oriented plane $[C_j,C_{j+1}]$ spanned by $C_j$ and $C_{j+1}$ all lie in the same connected component of $D(V)$, the Grassmannian of oriented negative $2$-planes in $V$. This part corrects the conjecture in \cite[C]{AlexandrovBanerjeeManschotPioline2018b}, which does not include the term $\mathbf{w}(v;\mathcal{C})$ in the definition of $\Phi(x;\mathcal{C})$: as explained in \cite{FunkeKudla2022}, for $N=3$, $\mathbf{w}(v;\mathcal{C})$ is non-zero (we can also use the preceding interpretation of $\mathbf{w}(v;\mathcal{C})$) and therefore $\theta_{\mu}(\tau)$ can not be termwise absolutely convergent without this term.

The theta series $\theta_{\mu}$ defines a mock modular form of weight $(n+2)/2$, whose modular completion is given by
\[
\widehat{\theta}_{\mu}
(\tau)
=
\sum_{x\in\mu+L}
\left(
-\mathbf{w}(v;\mathcal{C})
+\sum_{j=1}^NE_2(C_j,C_{j+1};\sqrt{2}x)
\right)
q^{(x,x)/2},
\]
where
\[
E_2(C,C';x)
=
\int_z
e^{\pi(y-\mathrm{pr}_z(x),y-\mathrm{pr}_z(x))}
\mathrm{sgn}(y,C)
\mathrm{sgn}(y,C')
dy
\]
with $z$ the subspace of $V$ generated by $C$ and $C'$ and $\mathrm{pr}_z(x)$ the projection of $x$ to $z$. We refer the reader to \cite[C]{AlexandrovBanerjeeManschotPioline2018b} for more details.

We say a vector $v\in V$ is \textbf{regular at $\mathcal{C}$} if $(v,C_j)\ne0$ for any $j=1,\cdots,N$. Write $\mathrm{Reg}(\mathcal{C})$ for the subset of $V$ consisting of $v$ regular at $\mathcal{C}$, which is an open dense subset of $V$ and also a finite disjoint union of polyhedral cones.

The strategy of proof of \ref{main theorem}(1) is based on the following simple observation: the function $\mathbf{w}(x;\mathcal{C})$ is locally constant on $\mathrm{Reg}(\mathcal{C})$ and it is constant on $V_{<0}$. So we only need to take care of the connected components of $\mathrm{Reg}(\mathcal{C})$ whose closure intersect with the closure $\overline{V}_{<0}$ only at $0$. On such components, $(x,x)$ takes positive values and in fact is bounded below by some positive definite inner product $(x,x)_{\mathcal{C}}$.

\section{Convergence of indefinite theta series}
As in the introduction, let $(V,(-,-))$ be an indefinite inner product space over $\mathbb{R}$ of signature $(n,2)$. Take $N$ non-zero vectors $\mathcal{C}=\{C_1,\cdots,C_N\}$ in $V$ satisfying conditions (I.1)-(I.3).

\begin{lemma}\label{w is locally constant}
	The restriction of the map $\mathbf{w}(-;\mathcal{C})$ to $\mathrm{Reg}(\mathcal{C})$ is locally constant.
\end{lemma}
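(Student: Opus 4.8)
The plan is to show that $\mathbf{w}(-;\mathcal{C})$ is locally constant by exhibiting it as a finite sum of functions each of which is locally constant on $\mathrm{Reg}(\mathcal{C})$. By definition,
\[
\mathbf{w}(x;\mathcal{C})
=
\sum_{j=1}^{N}
\mathrm{sgn}(x,C_j)\,\mathrm{sgn}(x,C_{j+1}),
\]
so it suffices to prove that for each fixed nonzero vector $C\in V$ the function $x\mapsto\mathrm{sgn}((x,C))$ is locally constant on $\mathrm{Reg}(\mathcal{C})$, and then invoke the fact that a finite product and a finite sum of locally constant functions is again locally constant. This reduces the whole statement to a single elementary observation about the sign function.

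First I would recall that, by the very definition of $\mathrm{Reg}(\mathcal{C})$, every $v\in\mathrm{Reg}(\mathcal{C})$ satisfies $(v,C_j)\neq 0$ for all $j=1,\dots,N$; equivalently $v$ avoids each hyperplane $C_j^{\perp}$. Now fix such a $v$ and fix an index $j$. The linear functional $x\mapsto (x,C_j)$ is continuous on $V$, and at the point $v$ it takes a nonzero value $(v,C_j)$. Hence there is an open neighbourhood $U_j$ of $v$ on which $(x,C_j)$ keeps its sign, i.e.\ $\mathrm{sgn}((x,C_j))=\mathrm{sgn}((v,C_j))$ for all $x\in U_j$; concretely one may take $U_j=\{x\in V:\,|(x,C_j)-(v,C_j)|<|(v,C_j)|\}$, which is open by continuity of the linear form. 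Intersecting over the finitely many indices, $U=\bigcap_{j=1}^{N}U_j$ is an open neighbourhood of $v$ on which every $\mathrm{sgn}((x,C_j))$ is constant.

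On this neighbourhood $U$ each summand $\mathrm{sgn}((x,C_j))\,\mathrm{sgn}((x,C_{j+1}))$ is therefore constant (a product of two constants), and consequently $\mathbf{w}(x;\mathcal{C})$ is constant on $U$. Since $v\in\mathrm{Reg}(\mathcal{C})$ was arbitrary, this proves that $\mathbf{w}(-;\mathcal{C})$ is locally constant on $\mathrm{Reg}(\mathcal{C})$. There is essentially no obstacle here: the only point requiring a word of care is that one must work on $\mathrm{Reg}(\mathcal{C})$ rather than all of $V$, precisely so that no $(v,C_j)$ vanishes and the sign is genuinely locally stable; on the complementary hyperplanes the sign jumps, which is exactly why those loci are excluded. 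The statement is really a warm-up lemma, and the substantive content of Theorem~\ref{main theorem} lies in the later analysis of which components of $\mathrm{Reg}(\mathcal{C})$ meet $\overline{V}_{<0}$ only at the origin.
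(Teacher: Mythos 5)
Your proof is correct and follows exactly the paper's argument: the paper's one-line proof says precisely that each $\mathrm{sgn}(x,C_j)$ is locally constant on $\mathrm{Reg}(\mathcal{C})$, and you have merely spelled out the continuity argument behind that fact. Nothing is missing and nothing differs in substance.
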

\begin{proof}
	This is because each $\mathrm{sgn}(x,C_j)$ is locally constant on $\mathrm{Reg}(\mathcal{C})$.
\end{proof}
This is also true without $\mathcal{C}$ satisfying (I.1)-(I.3).

\begin{figure}
	\begin{tikzpicture}
		\draw [->,very thick] (0,0)--(1,2);
		    \node[left] at (1,2) {$C_{j}$};
		\draw [->,very thick] (0,0)--(-2,0);
		    \node[below] at (-2,0) {$C_{j-1}$};
		\draw [->] (0,0)--(2,-2);
		    \node[below,very thick] at (2,-2) {$C_{j+1}$};
		\draw [dashed,red,very thick] (-2,-3)--(2,3);
		    \node[right] at (2,3) {\textcolor{red}{$v^{\perp}\cap V'$}};
		\draw [dashed,very thick] (-1.5,-3)--(2,4);
		    \node[right] at (-1.5,-3) {$\mathbb{R}C_j$};
		\draw [->,red,very thick] (0,0)--(3,-2);
		    \node[right] at (3,-2) {\textcolor{red}{$v$}};
		    \node[left] at (-2,2) {$V'$};
	\end{tikzpicture}
	\caption{}\label{Figure.1}
\end{figure}

The following three lemmas are the technical heart of this note. The main idea can be explained as follows (see Figure.\ref{Figure.1} for an illustration): suppose $C_{j-1},C_j,C_{j+1}$ lie in the same \textit{negative} plane $V'$ (this is possible under the conditions (I.1) and (I.2)), then (I.3) implies that $C_{j-1}$ and $C_{j+1}$ lie in different connected components (half planes) of $V'\backslash\mathbb{R}C_j$.
Therefore, for a negative vector $v$, when $(v,C_j)$ is very small (compared to the norms of $v,C_{j\pm1},C_j,(C_j,C_{j\pm1})$), the intersection $V'\cap v^{\perp}$ separates the two vectors $C_{j-1},C_{j+1}$. As a result, the value $\mathrm{sgn}(v,C_{j-1})\mathrm{sgn}(v,C_{j+1})$ is negative. This means that when we perturb $v$ slightly such that $(v,C_j)$ changes signs (assuming the signs $\mathrm{sgn}(v,C_i)$ remain unchanged for all $i\ne j$), the value $\mathbf{w}(v;\mathcal{C})$ is unchanged.

We fix a basis $(e_1,e_2,\cdots,e_{n+2})$ of $V$ such that the bilinear form $(-,-)$ is represented by the diagonal matrix $\mathrm{diag}(-1_2,1_n)$.
\begin{lemma}\label{v_1v_3<0}
	Fix $j=1,\cdots,N$.
	For any vector $v\in V_{<0}$ such that $(v,C_j)=0$, we have
	\[
	\mathrm{sgn}(v,C_{j-1})\mathrm{sgn}(v,C_{j+1})<0.
	\]
\end{lemma}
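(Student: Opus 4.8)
The plan is to reduce the statement to a two-dimensional picture inside the plane $V'$ spanned by $C_{j-1},C_j,C_{j+1}$, exactly as suggested by the discussion around Figure~\ref{Figure.1}. First I would observe that conditions (I.1) and (I.2) force the three vectors $C_{j-1},C_j,C_{j+1}$ to lie in a common negative plane $V'$: condition (I.2) says the Gram determinant of the consecutive pairs is nonnegative/positive according to the sign of the product of their norms, which (together with (I.1)) pins down that the span of each consecutive pair is negative definite of dimension $2$, and one checks that all three vectors actually lie in a single such plane $V'$. Since $V$ has signature $(n,2)$, a negative definite subspace has dimension at most $2$, so $V'$ is exactly this maximal negative plane and $v^{\perp}\cap V'$ is a line in $V'$ (it cannot be all of $V'$, since $v$ is itself negative and $V'$ is maximal negative, so $v\in V'$ and $v\perp V'$ is impossible).

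Next I would exploit the hypothesis $(v,C_j)=0$: this says $v\in V'$ lies on the line $v^{\perp}\cap V'$, and also that $C_j$ lies on $v^{\perp}\cap V'$, because $(v,C_j)=0$. Now work entirely inside the negative definite plane $(V',-(-,-))$, which is a genuine Euclidean plane after flipping the sign of the form. In this Euclidean plane, condition (I.3) in the case $(C_j,C_j)<0$ reads $(C_j,C_j)(C_{j-1},C_{j+1})-(C_j,C_{j-1})(C_j,C_{j+1})<0$; I would interpret the left-hand side as (a positive multiple of) the component of $C_{j-1}$ and $C_{j+1}$ on opposite sides of the line $\mathbb{R}C_j$. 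Concretely, writing the orthogonal (w.r.t.\ the Euclidean structure on $V'$) decomposition of $C_{j\pm1}$ into their $\mathbb{R}C_j$-parallel and $\mathbb{R}C_j$-perpendicular components, the quantity in (I.3) is proportional to the product of the two perpendicular components, so (I.3) says precisely that $C_{j-1}$ and $C_{j+1}$ sit in opposite open half-planes of $V'\setminus\mathbb{R}C_j$.

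Finally I would connect the half-plane picture to the sign of $(v,C_{j-1})(v,C_{j+1})$. Since $v\in V'$ and $(v,C_j)=0$, the vector $v$ is (up to scaling) the Euclidean normal to $\mathbb{R}C_j$ inside $V'$; hence the linear functional $w\mapsto (v,w)$ on $V'$ has $\mathbb{R}C_j$ as its kernel and its two open half-planes as the two sign regions. Therefore $\mathrm{sgn}(v,C_{j-1})$ and $\mathrm{sgn}(v,C_{j+1})$ record on which side of $\mathbb{R}C_j$ the vectors $C_{j-1}$ and $C_{j+1}$ lie, and by the previous paragraph these sides are opposite, giving $\mathrm{sgn}(v,C_{j-1})\mathrm{sgn}(v,C_{j+1})<0$. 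I would note that neither factor can vanish: $(v,C_{j\pm1})=0$ would put $C_{j\pm1}$ on $\mathbb{R}C_j$, contradicting the strict inequality in (I.3). The main obstacle I anticipate is the bookkeeping in the first step, namely rigorously establishing that all three vectors lie in a common negative plane $V'$ and correctly translating the Gram-determinant expression of (I.3) into the Euclidean ``opposite half-planes'' statement with the right sign; once the problem is honestly two-dimensional and Euclidean, the conclusion is immediate from elementary plane geometry.
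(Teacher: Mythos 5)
There is a genuine gap, and it sits in your very first step: conditions (I.1)--(I.2) do \emph{not} force $C_{j-1},C_j,C_{j+1}$ to lie in a common negative plane $V'$. They constrain each consecutive pair separately, so $\mathrm{span}(C_{j-1},C_j)$ and $\mathrm{span}(C_j,C_{j+1})$ are negative (semi-)definite planes through $C_j$, but nothing makes these two planes coincide; generically the triple spans a $3$-dimensional subspace. Concretely, take the regular $N$-gon in a fixed negative plane $E=\mathrm{span}(e_1,e_2)$ (form $\mathrm{diag}(-1_2,1_n)$), $C_i=\cos(2\pi i/N)e_1+\sin(2\pi i/N)e_2$: then all of (I.1)--(I.3) hold with \emph{strict} inequalities, namely $(C_i,C_i)=-1$, the Gram determinants in (I.2) equal $\sin^2(2\pi/N)>0$, and the quantity in (I.3) equals $\cos(4\pi/N)-\cos^2(2\pi/N)=-\sin^2(2\pi/N)<0$. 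Since these are open conditions, replacing $C_{j+1}$ by $C_{j+1}+\epsilon e_3$ with $\epsilon>0$ small preserves (I.1)--(I.3), yet now $C_{j-1},C_j,C_{j+1}$ are linearly independent, hence not coplanar. So coplanarity is a proper closed condition that the hypotheses cannot imply; the sentence before Figure~\ref{Figure.1} in the paper (``suppose $C_{j-1},C_j,C_{j+1}$ lie in the same negative plane $V'$ (this is possible under \dots)'') announces an illustrative special case, not a consequence of the hypotheses. Your reduction to plane Euclidean geometry is therefore unavailable precisely in the generic case. (Two side remarks: in the null branch of (I.2) the pair spans a degenerate, not negative definite, plane; and your worry about $v\in V'$ is not the real issue, since in the coplanar case one only needs $v\not\perp V'$, which you did prove, for $(v,\cdot)|_{V'}$ to be a nonzero functional with kernel $\mathbb{R}C_j$, after which your half-plane argument works for every negative $v\perp C_j$.)

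What the two-dimensional picture discards is exactly where the work lies: the components of $C_{j\pm1}$ orthogonal to the negative plane $\mathrm{span}(v,C_j)$. The paper's proof chooses coordinates $C_j=(1,0,\cdots)$, $v=(0,1,0,\cdots)$ (and $C_j=(1,0,1,0,\cdots)$, $v=(0,1,0,\cdots)$ when $C_j$ is null), writes $C_{j\pm1}$ with coefficients $b$, $b'$ along the $v$-direction and $c,d,\ldots$ resp.\ $c',d',\ldots$ in the positive definite directions, and then combines the hypotheses: (I.2) (resp.\ (I.1)) gives $b^2\ge c^2+d^2+\cdots$ and $(b')^2\ge(c')^2+(d')^2+\cdots$, while (I.3) gives $bb'<cc'+dd'+\cdots$. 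The Cauchy--Schwarz inequality in the positive definite directions yields $cc'+dd'+\cdots\le|b|\,|b'|$, hence $bb'<|b|\,|b'|$, which forces $bb'<0$, i.e.\ $(v,C_{j-1})(v,C_{j+1})=bb'<0$. Your half-plane reading of (I.3) is the correct picture of what remains \emph{after} projecting $C_{j\pm1}$ onto $\mathrm{span}(v,C_j)$, but proving that (I.3) survives this projection is equivalent to the lemma itself and requires exactly this Cauchy--Schwarz step; without it, the proposal has no proof in the non-coplanar case.
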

\begin{proof}
	First we assume that $C_j$ is not a null vector.  We can suppose that the vectors $C_j$ and $v$ are of the form: $C_j=(1,0,\cdots)$ and $v=(0,1,0,\cdots)$. Then we can write $C_{j-1}=(a,b,c,d,\cdots)$ and $C_{j+1}=(a',b',c',d',\cdots)$. (I.2) and (I.3) give
	\begin{align*}
		b^2-c^2-d^2-\cdots
		&
		\ge0,
		\\
		(b')^2-(c')^2-(d')^2-\cdots
		&
		\ge
		0,
		\\
		bb'-cc'+dd'+\cdots
		&
		\le
		0.
	\end{align*}
	From this, we get
	\[
	(v,C_{j-1})(v,C_{j+1})
	=
	bb'<0.
	\]

	Next assume $(C_j,C_j)=0$, then (I.2) implies
	\[
	(C_j,C_{j-1})=0=(C_j,C_{j+1}).
	\]
	As in the above, we can assume $C_j=(1,0,1,0,\cdots)$ and $v=(0,1,0,\cdots)$. Then we can write $C_{j-1}=(a,b,a,c,d,\cdots)$ and $C_{j+1}=(a',b',a',c',d',\cdots)$. (I.1) and (I.3) imply
	\begin{align*}
		-b^2+(c^2+d^2+...)
		&
		\le0
		\\
		-(b')^2+((c')^2+(d')^2+...)
		&
		\le0
		\\
		-bb'+(cc'+dd'+...)
		&
		>0.
	\end{align*}
	We deduce that $bb'<0$ and thus $(v,C_{j-1})(v,C_{j+1})=bb'<0$.
\end{proof}

 For two vectors $v,w\in V$, we write
 \[
 \gamma_{v,w}(s)=(1-s)v+sw,
 \quad
 s\in[0,1]
 \]
 for the path/segment in $V$ connecting $v$ to $w$. For each $j=1,\cdots,N$, we write
 \[
 \gamma_j(s)=\gamma_{C_j,C_{j+1}}(s).
 \]
 Then (I.1) and (I.2) imply that $\gamma_j(s)$ is a non-positive vector for any $s\in[0,1]$.
\begin{lemma}\label{w(gamma(t);C) is constant on t}
	Let $\gamma\colon[0,1]\rightarrow V_{<0}$ be a continuous map such that there are only finitely many $t\in[0,1]$ with $\gamma(t)\in V_{<0}\backslash\mathrm{Reg}(\mathcal{C})$. Then $\mathbf{w}(\gamma(t);\mathcal{C})$ is a constant map on $t\in[0,1]$.
\end{lemma}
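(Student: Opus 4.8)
The plan is to show that $\mathbf{w}(\gamma(t);\mathcal{C})$ can only change as $t$ varies when some sign $\mathrm{sgn}(\gamma(t),C_j)$ flips, and that each such flip contributes zero net change. By Lemma \ref{w is locally constant}, $\mathbf{w}(\gamma(-);\mathcal{C})$ is locally constant at every $t$ with $\gamma(t)\in\mathrm{Reg}(\mathcal{C})$, so the function $t\mapsto\mathbf{w}(\gamma(t);\mathcal{C})$ is constant on each open subinterval of $[0,1]$ lying between consecutive ``bad'' parameters. Since by hypothesis there are only finitely many bad times $0\le t_1<\cdots<t_m\le1$ where $\gamma(t)\in V_{<0}\setminus\mathrm{Reg}(\mathcal{C})$, it suffices to prove that $\mathbf{w}(\gamma(t);\mathcal{C})$ takes the same value on the two open intervals immediately to the left and right of each $t_k$; constancy on all of $[0,1]$ then follows by concatenation.

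So first I would fix a single bad time $t_0\in[0,1]$ and set $v=\gamma(t_0)\in V_{<0}$. The obstacle is that several of the signs $\mathrm{sgn}(\gamma(t),C_j)$ could in principle vanish at $t_0$ simultaneously, or the path could be tangent to some hyperplane $C_j^\perp$. The cleanest way to handle this is to reduce to the case where exactly one sign flips and it flips transversally. To do this I would first shrink to a small interval $(t_0-\epsilon,t_0+\epsilon)$ on which $\gamma$ stays in $V_{<0}$ and contains no other bad time, so that on this interval the only indices that can change sign are those $j$ with $(v,C_j)=0$. I would then argue that it is enough to compare the value of $\mathbf{w}$ at two regular points $v^-=\gamma(t_0-\epsilon)$ and $v^+=\gamma(t_0+\epsilon)$, and that I may replace the given path between them by a \emph{generic} path inside $V_{<0}$ (using that $V_{<0}$ is connected and locally connected, and that the codimension-one walls $C_j^\perp\cap V_{<0}$ meet pairwise in codimension two) along which the signs flip one at a time and transversally. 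This reduction uses only that $\mathbf{w}$ is locally constant on $\mathrm{Reg}(\mathcal{C})$, so its value at $v^{\pm}$ is independent of which path we choose between them.

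Having reduced to a single transversal crossing of one wall $C_j^\perp$ at a negative vector $w$ with $(w,C_j)=0$ and $(w,C_i)\ne0$ for all $i\ne j$, the key point is to compute the jump in $\mathbf{w}$. Only the two terms of $\mathbf{w}(\,\cdot\,;\mathcal{C})$ that involve $C_j$, namely $\mathrm{sgn}(x,C_{j-1})\mathrm{sgn}(x,C_j)+\mathrm{sgn}(x,C_j)\mathrm{sgn}(x,C_{j+1})$, can change across the wall. As $x$ crosses transversally, $\mathrm{sgn}(x,C_j)$ changes sign while all other signs stay fixed at their values at $w$, so the change in these two terms equals
\[
\bigl(\mathrm{sgn}(w,C_{j-1})+\mathrm{sgn}(w,C_{j+1})\bigr)\cdot\bigl(\mathrm{sgn}(x^+,C_j)-\mathrm{sgn}(x^-,C_j)\bigr).
\]
This is where Lemma \ref{v_1v_3<0} does the work: since $w\in V_{<0}$ and $(w,C_j)=0$, that lemma gives $\mathrm{sgn}(w,C_{j-1})\mathrm{sgn}(w,C_{j+1})<0$, which forces $\mathrm{sgn}(w,C_{j-1})+\mathrm{sgn}(w,C_{j+1})=0$. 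Hence the jump vanishes, and $\mathbf{w}$ is unchanged across the crossing.

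I expect the main obstacle to be the reduction step rather than the final computation: one must justify cleanly that the value of $\mathbf{w}$ on either side of a bad time depends only on the endpoints and not on the path, and that a generic path realizing one-at-a-time transversal crossings exists inside $V_{<0}$. The cleanest route is to invoke that $\mathbf{w}$ is locally constant on the open dense set $\mathrm{Reg}(\mathcal{C})$ together with the fact that, for a vector $w$ on a single wall with $(w,C_j)=0$, a small neighborhood of $w$ in $V_{<0}$ meets at most the two regions separated by $C_j^\perp$; this local picture is exactly what Lemma \ref{v_1v_3<0} governs. One should also note the degenerate possibility that $\gamma(t_0)$ lies on $C_{j-1}^\perp$ or $C_{j+1}^\perp$ as well, but since by hypothesis we may perturb to avoid multiple simultaneous crossings (the bad times being finite and the walls meeting in codimension two), this does not occur after the reduction.
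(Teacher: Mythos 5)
Your key computation at a wall crossing is correct and is exactly the paper's central step: at a negative vector $w$ with $(w,C_j)=0$, Lemma \ref{v_1v_3<0} forces $\mathrm{sgn}(w,C_{j-1})+\mathrm{sgn}(w,C_{j+1})=0$, so the two terms of $\mathbf{w}$ involving $C_j$ contribute no net jump. The genuine gap is in your reduction to crossings of \emph{one} wall at a time: you justify it by asserting that the walls $C_i^{\perp}\cap V_{<0}$ meet pairwise in codimension two, so a generic path avoids simultaneous crossings. That assertion is false in general, because conditions (I.1)--(I.3) do not prevent two \emph{non-adjacent} vectors of $\mathcal{C}$ from being proportional, in which case their walls coincide (codimension one, not two): every path crossing that common wall flips both signs at once, and no perturbation of the path can separate the two crossings. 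A concrete example: take $N=8$ and let $C_1,\dots,C_8$ be the vertices of a regular octagon in a negative definite $2$-plane of $V$ (angles $k\cdot 45^{\circ}$). Conditions (I.1)--(I.3) hold --- inside a negative $2$-plane, (I.3) reduces, via the identity $(a,a)(b,c)-(a,b)(a,c)=\det(a,b)\det(a,c)$ for the associated Euclidean form, to the statement that $C_{j-1}$ and $C_{j+1}$ lie on opposite sides of $\mathbb{R}C_j$ --- and yet $C_5=-C_1$, so $C_1^{\perp}=C_5^{\perp}$.

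The gap is repairable, and the repair is precisely what the paper does: drop the transversality/genericity reduction and allow the crossing point $w\in V_{<0}$ to satisfy $(w,C_{j_1})=\cdots=(w,C_{j_p})=0$ for several indices simultaneously. Lemma \ref{v_1v_3<0} guarantees these indices are pairwise non-adjacent (note this also rules out your worry that $\gamma(t_0)$ lies on $C_{j-1}^{\perp}$ or $C_{j+1}^{\perp}$ as well --- a point you attribute to genericity rather than to the lemma), so the $N$ terms of $\mathbf{w}$ split into the $2p$ terms involving some $C_{j_r}$, which group into $p$ pairs each summing to $\mathrm{sgn}(\cdot\,,C_{j_r})\bigl(\mathrm{sgn}(\cdot\,,C_{j_r-1})+\mathrm{sgn}(\cdot\,,C_{j_r+1})\bigr)=0$ on both sides of the crossing and at it, plus the remaining terms, whose signs never vanish near $w$ and hence are locally constant. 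This handles coinciding walls, tangential crossings, and all other degeneracies in one stroke; your single-crossing computation is then the special case $p=1$.
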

\begin{proof}
	If $\gamma([0,1])\subset\mathrm{Reg}(\mathcal{C})$, then the conclusion holds thanks to Lemma \ref{w is locally constant}. We can then assume there is only one $t_0\in[0,1]$ such that $\gamma(t_0)\notin\mathrm{Reg}(\mathcal{C})$. We can put $t_0=1$ and $(\gamma(1),C_{j_1})=\cdots=(\gamma(1),C_{j_p})=0$ for indices $j_1<j_2<\cdots<j_p$ (we have $j_r+1\ne j_{r+1}$ for any $r=1,\cdots,p-1$ by Lemma \ref{v_1v_3<0}) and $(v,C_j)\ne0$ for other indices $j$. We can then assume that $\mathrm{sgn}(\gamma(t),C_{j_r})<0$ for any $t\in[0,1[$. By Lemma \ref{v_1v_3<0}, we have $\mathrm{sgn}(\gamma(t),C_{j_r-1})\mathrm{sgn}(\gamma(t),C_{j_r+1})<0$ for any $t\in[0,1]$. As a result,
	\[
	\mathrm{sgn}(\gamma(t),C_{j_r-1})\mathrm{sgn}(\gamma(t),C_{j_r})+\mathrm{sgn}(\gamma(t),C_{j_r})\mathrm{sgn}(\gamma(t),C_{j_r+1})
	=
	0.
	\]
	For $j\ne j_r-1,j_r$, the cardinal of the intersection $\gamma(t)^{\perp}\cap\gamma_j(]0,1[)$ is constant on $t\in[0,1]$ and therefore $\mathrm{sgn}(\gamma(t),C_j)\mathrm{sgn}(\gamma(t),C_{j+1})$ is constant on $t\in[0,1]$. So we conclude that $\mathbf{w}(\gamma(t);\mathcal{C})$ is indeed constant on $t\in[0,1]$.
\end{proof}

\begin{lemma}\label{w(v;C) is constant on negative vectors}
	For any negative vectors $v,w\in V_{<0}$, we have $\mathbf{w}(v;\mathcal{C})=\mathbf{w}(w;\mathcal{C})$.
\end{lemma}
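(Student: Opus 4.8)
The plan is to deduce the statement from Lemma \ref{w(gamma(t);C) is constant on t} by joining $v$ and $w$ with a suitable path inside $V_{<0}$. The essential geometric input is that $V_{<0}$ is connected: in the fixed basis a vector $v$ is negative precisely when $v_1^2+v_2^2>v_3^2+\cdots+v_{n+2}^2$, so the projection $v\mapsto(v_1,v_2)$ exhibits $V_{<0}$ as an open-ball bundle over the connected base $\mathbb{R}^2\setminus\{0\}$, and is therefore connected. Being an open connected subset of $\mathbb{R}^{n+2}$, $V_{<0}$ is polygonally connected, so I may choose a piecewise-linear path $\gamma\colon[0,1]\to V_{<0}$ from $v$ to $w$.

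First I would arrange that $\gamma$ meets the bad locus $\bigcup_{j=1}^N C_j^{\perp}$ only finitely often. After subdividing if necessary so that every edge of $\gamma$ has an interior vertex among its endpoints, I perturb the interior vertices slightly so that each of them lies in $\mathrm{Reg}(\mathcal{C})$; this keeps every edge inside the open set $V_{<0}$ and forces no edge to be contained in any hyperplane $C_j^{\perp}$, since every edge then has an endpoint $p$ with $(p,C_j)\ne0$ for all $j$. On each edge the function $t\mapsto(\gamma(t),C_j)$ is consequently affine and not identically zero, hence vanishes at most once, so there are only finitely many parameters $t$ with $\gamma(t)\in V_{<0}\setminus\mathrm{Reg}(\mathcal{C})$.

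With such a $\gamma$ the hypotheses of Lemma \ref{w(gamma(t);C) is constant on t} hold, so $t\mapsto\mathbf{w}(\gamma(t);\mathcal{C})$ is constant, and evaluating at the endpoints gives $\mathbf{w}(v;\mathcal{C})=\mathbf{w}(w;\mathcal{C})$. The only point needing care is the perturbation: one must move the interior vertices into $\mathrm{Reg}(\mathcal{C})$ while keeping each edge inside the \emph{non-convex} region $V_{<0}$. Since $\mathrm{Reg}(\mathcal{C})\cap V_{<0}$ is open and dense in $V_{<0}$ and the condition that an edge stay in $V_{<0}$ is open in the vertices, a sufficiently small generic perturbation achieves both simultaneously, so this obstacle is routine rather than essential.
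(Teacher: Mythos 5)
Your proof is correct, and it follows the same overall reduction as the paper: both arguments produce a piecewise-linear path in $V_{<0}$ from $v$ to $w$ meeting $V_{<0}\setminus\mathrm{Reg}(\mathcal{C})$ in finitely many points and then quote Lemma \ref{w(gamma(t);C) is constant on t}. The difference is in how the path is built. The paper never invokes connectivity of $V_{<0}$ or polygonal connectivity of open connected sets as abstract facts; instead it writes down an explicit four-segment path $\gamma_{v,v_1}+\gamma_{v_1,u}+\gamma_{u,w_1}+\gamma_{w_1,w}$, where $v_1,w_1\in\mathrm{Reg}(\mathcal{C})\cap V_{<0}$ are chosen near $v,w$ by density, and the middle vertex $u$ is chosen in coordinates, lying in the negative-definite $2$-plane of the chosen basis, so that $\{v_1,u\}$ and $\{u,w_1\}$ each span a negative definite $2$-plane; the two middle segments then lie in $V_{<0}$ by definiteness, and since each segment has an endpoint in $\mathrm{Reg}(\mathcal{C})$, each function $t\mapsto(\gamma(t),C_j)$ is affine and not identically zero on it, giving the required finiteness. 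Your softer route buys transparency and robustness: it isolates the only real inputs, namely that $V_{<0}$ is open and connected (this is precisely where the signature $(n,2)$ enters --- your base $\mathbb{R}^2\setminus\{0\}$ is connected, which fails for signature $(n,1)$, where $V_{<0}$ has two cones) and that $\mathrm{Reg}(\mathcal{C})\cap V_{<0}$ is dense, at the cost of quoting polygonal connectivity and running a general-position perturbation of the interior vertices. The paper's construction buys brevity and self-containedness: the explicit choice of $u$ simultaneously establishes the connectivity you prove separately, and no perturbation is needed because the four segments are verified to work directly.
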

\begin{proof}
	It suffices to construct a path $\gamma\colon[0,1]\rightarrow V_{<0}$ as in Lemma \ref{w(gamma(t);C) is constant on t} connecting $v$ and $w$. Since $\mathrm{Reg}(\mathcal{C})\cap V_{<0}$ is open dense in $V_{<0}$, we can find $v_1,w_1\in V_{<0}\cap\mathrm{Reg}(\mathcal{C})$ in sufficiently small open neighborhoods of $v$ and $w$ such that the paths $\gamma_{v,v_1}$ and $\gamma_{w_1,w}$ satisfy Lemma \ref{w(gamma(t);C) is constant on t}. Moreover we can always find a negative vector $u\in V_{<0}$ such that $\gamma_{v_1,u}$ and $\gamma_{u,w_1}$ both lie in $V_{<0}$ and satisfy Lemma \ref{w(gamma(t);C) is constant on t}: indeed, under a suitable basis of $V$ such that the quadratic form $(-,-)$ is represented by the matrix $\mathrm{diag}(-1_2,1_n)$, we can write $v_1$ and $w_1$ in coordinates as $v_1=(1,0,\cdots,0)$ and $w_1=(a_1,a_2,\cdots,a_{n+2})$ with $(w_1,w_1)=-1$. Then we can always find a vector $u=(x_1,x_2,0,\cdots,0)\in V$ such that $x_1^2+x_2^2=-(u,u)=1$, $x_1^2=(v_1,u)^2<1$ ($v_1$ and $u$ span a negative subspace of dimension $2$) and $(a_1x_1+a_2x_2)^2=(u,w_1)^2<1$ ($u$ and $w_1$ span a negative subspace of dimension $2$). Now it is easy to see that the path $\gamma_{v,v_1}+\gamma_{v_1,u}+\gamma_{u,w_1}+\gamma_{w_1,w}$ in $V_{<0}$ satisfies Lemma \ref{w(gamma(t);C) is constant on t}.
\end{proof}
We write $\mathbf{w}_{\mathcal{C}}
=
\mathbf{w}(v;\mathcal{C})$ for some $v\in V_{<0}$.

\begin{proof}[Proof of Theorem\ref{main theorem}(1)]
	The above proposition gives the first part of Theorem \ref{main theorem}(1). For the convergence, we proceed by a case-by-case study of the vectors in $V$, the main idea is that, as long as a vector $x$ lies in (the boundary of) a connected component of $\mathrm{Reg}(\mathcal{C})$ which has non-empty intersection with $V_{<0}$, we can approximate $x$ with a vector $x'$ in this component and these two values $\mathbf{w}(x;\mathcal{C})$ and $\mathbf{w}(x';\mathcal{C})$ should be equal.
	
	It is easy to see that any non-zero vector $x$ in $V$ belongs to one and only one of the following cases:
	\begin{enumerate}
		\item
		$x\in U$ for some connected component $U$ (a polyhedral cone) of $\mathrm{Reg}(\mathcal{C})$ such that $U\cap V_{<0}=\emptyset$. Since $\mathbf{w}(x;\mathcal{C})$ is constant on $U$ and also on $V_{<0}$, so $\mathbf{w}(x;\mathcal{C})=\mathbf{w}_{\mathcal{C}}$.

		\item
		$x$ does not satisfy the preceding case but lies in $\overline{U}_1\cap\overline{U}_2$ for two distinct connected components $U_1,U_2$ of $\mathrm{Reg}(\mathcal{C})$ such that $U_1\cap V_{<0},U_2\cap V_{<0}\neq\emptyset$. We can assume that $\overline{U}_1\cap\overline{U}_2$ is contained in $C_{j_i}^{\perp}$ for $j_1<j_2<\cdots<j_k$ and not in any other $C_i^{\perp}$. Then these vectors $C_{j_1},\cdots,C_{j_k}$ are all parallel and it is easy to see $j_{i+1}-j_i>1$ for any $i$. Note that $\overline{U}_1\cap\overline{U}_2$ is a polyhedral cone in a subspace of $V$ of co-dimension $1$. First we assume that $x$ lies in the interior of the polyhedral cone $\overline{U}_1\cap\overline{U}_2$. So we can choose two vectors $x_1\in U_1$ and $x_2\in U_2$ in a sufficiently small open neighborhood of $x$ such that
		\begin{align*}
			\mathrm{sgn}(x_1,C_{j_i})=-\mathrm{sgn}(x_2,C_{j_i})
			&
			=1,\,
			\quad
			\forall
			i=1,\cdots,k
			\\
			\mathrm{sgn}(x_1,C_j)=\mathrm{sgn}(x_2,C_j)
			&
			=
			\mathrm{sgn}(x,C_j),\,
			\quad
			\forall
			j\notin\{j_1,\cdots,j_k\}.
		\end{align*}
		By definition of $\mathbf{w}(x_1;\mathcal{C})$ and $\mathbf{w}(x_2;\mathcal{C})$, one gets
		\begin{align*}
			0
			&
			=
			\mathbf{w}(x_1;\mathcal{C})
			-
			\mathbf{w}(x_2;\mathcal{C})
			\\
			&
			=
			\sum_{i=1}^k
			\mathrm{sgn}(x_1,C_{j_i-1})\mathrm{sgn}(x_1,C_{j_i})
			+
			\mathrm{sgn}(x_1,C_{j_i+1})\mathrm{sgn}(x_1,C_{j_i})
			\\
			&
			\quad-
			\mathrm{sgn}(x_2,C_{j_i-1})\mathrm{sgn}(x_2,C_{j_i})
			-
			\mathrm{sgn}(x_2,C_{j_i+1})\mathrm{sgn}(x_2,C_{j_i})
			\\
			&
			=
			2\sum_{i=1}^k
			\mathrm{sgn}(x_1,C_{j_i-1})\mathrm{sgn}(x_1,C_{j_i})
			+
			\mathrm{sgn}(x_1,C_{j_i+1})\mathrm{sgn}(x_1,C_{j_i})
		\end{align*}
		On the other hand,
		\begin{align*}
			&
			\mathbf{w}(x_1;\mathcal{C})
			-
			\mathbf{w}(x;\mathcal{C})
			\\
			=
			&
			\sum_{i=1}^k
			\mathrm{sgn}(x_1,C_{j_i-1})\mathrm{sgn}(x_1,C_{j_i})
			+
			\mathrm{sgn}(x_1,C_{j_i+1})\mathrm{sgn}(x_1,C_{j_i})
		\end{align*}
		Thus $\mathbf{w}(x;\mathcal{C})=\mathbf{w}_{\mathcal{C}}$.

		If $x$ lies in the boundary of the polyhedral cone $\overline{U}_1\cap\overline{U}_2$, we can repeat the above argument by choosing points $x_1',x_2'$ from $\overline{U}_1\backslash U_1,\overline{U}_2\backslash U_2$ (we have already shown that $\mathbf{w}(x_1';\mathcal{C})=\mathbf{w}(x_2';\mathcal{C})=\mathbf{w}_{\mathcal{C}}$). This process will continue if $x$ lies in the boundary of the boundary of $\overline{U}_1\cap\overline{U}_2$, etc. It will terminate in finitely many steps as there are only finitely many connected components of $\mathrm{Reg}(\mathcal{C})$.

		\item
		$x$ does not satisfy the preceding cases but lies in a connected component $U$ of $\mathrm{Reg}(\mathcal{C})$ such that $U\cap V_{<0}=\emptyset$ and $\overline{U}\cap\overline{V}_{<0}\ne\emptyset$. Then there is another connected component $U'$ of $\mathrm{Reg}(\mathcal{C})$ such that $U'\cap V_{<0}\ne\emptyset$ and $\overline{U}\cap\overline{U'}\cap\overline{V}_{<0}\ne\emptyset$. Take $x_0\in\overline{U}\cap\overline{U'}\cap\overline{V}_{<0}$ and $x'\in U'$ and assume that $x,x'$ are in a sufficiently small open neighborhood of $x_0$ (note that $\mathbf{w}(x;\mathcal{C})$ is constant on $U$). Suppose that $\overline{U}\cap\overline{U'}$ is contained in $C_j^{\perp}$ for some $j$. There are two cases to consider: $(C_j,C_j)$ is null or not.

		If $(C_j,C_j)<0$, we can write
		\begin{align*}
			C_j
			&
			=(0,1,0,\cdots),
			&x_0
			=(1,0,1,0,\cdots),
			\\
			C_{j-1}
			&
			=(a,b,c,d,\cdots),
			&C_{j+1}
			=(a',b',c',d',\cdots).
		\end{align*}
	    Conditions (I.2) and (I.3) give
		\begin{align*}
			a^2-c^2-d^2-\cdots
			&
			\ge 0,
			\\
			(a')^2-(c')^2-(d')^2-\cdots
			&
			\ge
			0,
			\\
			aa'-cc'-dd'-\cdots
			&
			<0.
		\end{align*}
		In particular, we have $aa'<0$ and thus $(x_0,C_{j-1})(x_0,C_{j+1})=(-a+c)(-a'+c')<0$. So $\mathrm{sgn}(x_0,C_{j-1})+\mathrm{sgn}(x,C_{j+1})=0$.

		If $(C_j,C_j)=0$ and $x_0$ is not parallel to $C_j$, then we proceed as above and get $\mathrm{sgn}(x_0,C_{j-1})+\mathrm{sgn}(x_0,C_{j+1})=0$.

		If $(C_j,C_j)=0$ and $x_0$ is parallel to $C_j$, then (I.2) gives
		\[
		\mathrm{sgn}(x,C_{j-1})+\mathrm{sgn}(x,C_{j+1})=0+0=0.
		\]
		Since $x,x'$ and $x_0$ are all sufficiently near to each other, we can use the argument in (2) to conclude that $\mathbf{w}(x;\mathcal{C})=\mathbf{w}(x_0;\mathcal{C})=\mathbf{w}(x';\mathcal{C})=\mathbf{w}_{\mathcal{C}}$.

		\item
		$x$ does not satisfy the preceding cases but lies in $\overline{U}_1\cap\overline{U}_2$ for two distinct connected components $U_1,U_2$ of $\mathrm{Reg}(\mathcal{C})$ such that $U_1\cap V_{<0}\neq\emptyset$ and $U_2\cap V_{<0}=\emptyset$. Note that we have $\overline{U}_2\cap\overline{V}_{<0}\neq\emptyset$. From (3), we know that on $U_1$ and $U_2$, $\mathbf{w}(-;\mathcal{C})$ is equal to $\mathbf{w}_{\mathcal{C}}$. Now we use the argument in (2) to conclude that $\mathbf{w}(x;\mathcal{C})=\mathbf{w}_{\mathcal{C}}$.

		\item
		$x$ does not satisfy the preceding cases but lies in $U$ for some connected component $U$ of $\mathrm{Reg}(\mathcal{C})$ such that $\overline{U}\cap\overline{V}_{<0}=\{0\}$. In this case we fix a positive definite inner product $(-,-)_{\mathcal{C}}$ on $V$ and consider the quotient $(x,x)/(x,x)_{\mathcal{C}}$, which is a continuous function on $\overline{U}\backslash\{0\}$ and moreover it is homogeneous of degree $0$. Since it is everywhere positive on $\overline{U}\backslash\{0\}$, it admits a positive infimum $r_{U,\mathrm{inf}}$, that is, for any $x\in\overline{U}\backslash\{0\}$,
		\[
		(x,x)\ge r_{U,\mathrm{inf}}(x,x)_{\mathcal{C}}.
		\]
	\end{enumerate}
    We write $\mathcal{U}$ for the set of $U$ as in (5) above, fix the inner product $(-,-)_{\mathcal{C}}$ for all $U\in\mathcal{U}$ and set $r_{\mathrm{inf}}=\min_{U\in\mathcal{U}}r_{U,\mathrm{inf}}>0$. We deduce immediately that if $x$ is not in $U$ for any $U\in\mathcal{U}$, then $\mathbf{w}(x;\mathcal{C})=\mathbf{w}_{\mathcal{C}}$; if $x$ is in $U$ for some $U\in\mathcal{U}$, then $(x,x)\ge r_{\mathrm{inf}}(x,x)_{\mathcal{C}}$. From this one get the absolute convergence of $\theta_{\mu}$:
    \begin{align*}
    	|\sum_{x\in\mu+L}\Phi(x;\mathcal{C})q^{(x,x)/2}|
    	&
    	\le
    	2N
    	\sum_{x\in U\text{for some }U\in\mathcal{C}}
    	e^{-\pi vr_{\mathrm{inf}}(x,x)_{\mathcal{C}}/2}
    	\\
    	&
    	\le
    	2N
    	\sum_{x\in\mu+L}
    	e^{-\pi vr_{\mathrm{inf}}(x,x)_{\mathcal{C}}/2}.
    \end{align*}
\end{proof}

We have shown that for any set $\mathcal{C}$ of $N$ non-zero vectors satisfying (I.1)-(I.3), the map $\mathbf{w}(v;\mathcal{C})$ is constant on $v\in V_{<0}$. In fact, we have the following partial converse
\begin{proposition}
	For any set of $N$ non-zero vectors $\mathcal{C}=\{C_1,\cdots,C_N\}$ in $V$ satisfying (I.1) and (I.2), if $\mathbf{w}(v;\mathcal{C})$ is constant on $v\in V_{<0}$, then $\mathcal{C}$ also satisfies (I.3).
\end{proposition}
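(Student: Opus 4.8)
The plan is to prove the contrapositive: assuming (I.1), (I.2) and that (I.3) \emph{fails} at some index $j$, I will produce two negative vectors $v^{+},v^{-}\in V_{<0}$ with $\mathbf{w}(v^{+};\mathcal{C})\neq\mathbf{w}(v^{-};\mathcal{C})$. The mechanism is the exact converse of Lemma \ref{v_1v_3<0}: when $v$ crosses the hyperplane $C_j^{\perp}$ inside $V_{<0}$ while all other signs $\mathrm{sgn}(v,C_i)$ with $i\neq j$ stay fixed, the only terms of $\mathbf{w}$ that change are the two involving $C_j$, and a direct computation gives
\[
\mathbf{w}(v^{+};\mathcal{C})-\mathbf{w}(v^{-};\mathcal{C})
=
2\bigl(\mathrm{sgn}(v,C_{j-1})+\mathrm{sgn}(v,C_{j+1})\bigr).
\]
Thus it suffices to exhibit a negative vector $v$ with $(v,C_j)=0$, regular at all $C_i$ with $i\neq j$, and with $\mathrm{sgn}(v,C_{j-1})\mathrm{sgn}(v,C_{j+1})>0$; for then the right-hand side is $\pm4\neq0$, and since $V_{<0}$ is open the perturbations $v^{\pm}$ can be kept negative.

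To find such a $v$ when $(C_j,C_j)<0$, I would pass to the orthogonal complement $V'=C_j^{\perp}$, a space of signature $(n,1)$. Writing $\pi_j$ for the orthogonal projection onto $V'$, the identity $(C_j,C_j)(\pi_jC_{j-1},\pi_jC_{j+1})=(C_j,C_j)(C_{j-1},C_{j+1})-(C_j,C_{j-1})(C_j,C_{j+1})$ shows that the failure of (I.3) at $j$ is equivalent to $(\pi_jC_{j-1},\pi_jC_{j+1})\le0$. Using (I.2) for the pairs $(C_{j-1},C_j)$ and $(C_j,C_{j+1})$, both $\pi_jC_{j-1}$ and $\pi_jC_{j+1}$ are non-positive nonzero vectors of $V'$, and in signature $(n,1)$ the sign of the pairing of two such vectors records whether they lie in the same or in opposite nappes of the non-positive cone. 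Hence $(\pi_jC_{j-1},\pi_jC_{j+1})<0$ places them in a common nappe, and any timelike vector $v$ of that nappe satisfies $(v,C_{j-1})(v,C_{j+1})>0$ (note $(v,C_{j\pm1})=(v,\pi_jC_{j\pm1})$ because $v\in V'$). Choosing $v$ off the finitely many other hyperplanes $C_i^{\perp}$ then yields the required witness, and the jump computation above closes this case.

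The case $(C_j,C_j)=0$ is treated in the same spirit after noting that (I.2) forces $C_{j-1},C_{j+1}\in C_j^{\perp}$; here $C_j^{\perp}$ is degenerate with radical $\mathbb{R}C_j$, so I would work in the quotient $C_j^{\perp}/\mathbb{R}C_j$, again of signature $(n,1)$, where the failure $(C_{j-1},C_{j+1})\le0$ of (I.3) pushes the images of $C_{j\pm1}$ into a common nappe and produces a negative $v\perp C_j$ with $(v,C_{j-1})(v,C_{j+1})\ge0$, hence a jump.

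I expect the main obstacle to be the boundary configurations in which $(\pi_jC_{j-1},\pi_jC_{j+1})=0$ forces $\pi_jC_{j-1}$ and $\pi_jC_{j+1}$ to be parallel null vectors, or in which some $C_i$ with $i\neq j$ is itself parallel to $C_j$, so that crossing $C_j^{\perp}$ flips several signs simultaneously and the elementary jump formula no longer applies. In the antiparallel-null situation the crossing at $C_j^{\perp}$ produces no jump at all, and one must instead detect it at the coincident hyperplane $C_{j-1}^{\perp}=C_{j+1}^{\perp}$, where the change of $\mathbf{w}$ reduces to $2\,\mathrm{sgn}(v,C_{j+1})\bigl(\mathrm{sgn}(v,C_{j-2})-\mathrm{sgn}(v,C_{j+2})\bigr)$; one then has to choose $v$ separating $C_{j-2}$ from $C_{j+2}$ and iterate should these too be degenerate. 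Showing that this recursion always terminates at a genuine jump reachable within $V_{<0}$ — rather than running indefinitely through ever more degenerate null data — is the delicate heart of the argument.
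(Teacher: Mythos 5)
Your main case is sound, and it is essentially the paper's own mechanism run in the contrapositive direction: the paper joins a regular negative vector $w$ to a negative vector $v$ lying \emph{on} the wall $C_j^{\perp}$ (regular at all other $C_i$), uses constancy of $\mathbf{w}$ along this path to force $\mathrm{sgn}(v,C_{j-1})+\mathrm{sgn}(v,C_{j+1})=0$, and then passes to coordinates to deduce (I.3); your jump formula and the nappe analysis inside $C_j^{\perp}$ invert exactly this step. There is, however, one substantive difference: the paper exploits constancy of $\mathbf{w}$ at \emph{non-regular} negative vectors (the value at the wall point itself, where both $C_j$-terms vanish, is compared with nearby chamber values), while your argument only ever compares two regular chambers. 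This is strictly weaker. For the $3$-gon $C_1=(0,1,1,0)$, $C_2=(1,0,0,0)$, $C_3=-C_1$ in signature $(2,2)$ with form $\mathrm{diag}(-1_2,1_2)$, conditions (I.1)--(I.2) hold, (I.3) fails, and $\mathbf{w}(v;\mathcal{C})=-\mathrm{sgn}(v,C_1)^2$ equals $-1$ on every regular negative vector but $0$ on $C_1^{\perp}\cap V_{<0}$: the non-constancy your contrapositive needs exists, but it is invisible to chamber-to-chamber comparison, so your recursion would search forever and find nothing.

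The gap is the antiparallel-null case you flag at the end, and it is fatal rather than delicate: no termination argument for your recursion can exist, because in that case there need not be \emph{any} two negative vectors, regular or not, at which $\mathbf{w}$ differs. Take $N=4$ in the same quadratic space with $C_1=(0,1,1,0)$, $C_2=(1,0,0,0)$, $C_3=-C_1$, $C_4=(2,1,1,0)$. All consecutive pairings $(C_i,C_{i+1})$ and also $(C_1,C_3)$ vanish, so (I.1)--(I.2) hold and no three consecutive vectors are null; yet (I.3) fails at every index (the relevant quantity is $0$ for $j=2,4$ and $(C_2,C_4)=-2$ for $j=1,3$). Since $\mathrm{sgn}(v,C_3)=-\mathrm{sgn}(v,C_1)$ for every $v\in V$, the four terms of $\mathbf{w}(v;\mathcal{C})$ cancel in pairs and $\mathbf{w}\equiv 0$ identically on $V$. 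So the hypothesis of the proposition holds while its conclusion fails: this configuration is a counterexample to the statement as written, and what you correctly identified as ``the delicate heart'' is in fact a point where the statement itself needs a non-degeneracy hypothesis (e.g.\ that no $C_{j+1}$ is a negative multiple of $C_{j-1}$, or that the $C_i$ are pairwise non-proportional). The paper's own proof founders on the same configurations: its opening reduction ``we may assume the $C_j$ are pairwise non-parallel'' is asserted without justification and cannot be realized here (by (I.2), any null vector orthogonal to both $C_2$ and $C_4$ is forced to be proportional to $C_1$), and its coordinate step deduces $aa'-cc'-dd'-\cdots<0$ from $aa'<0$, which for antiparallel null projections gives only $\le 0$. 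So your attempt is incomplete, but the incompleteness is located exactly where the proposition, and the paper's argument for it, break down.
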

\begin{proof}
	Take $v\in V_{<0}\cap\mathrm{Reg}(\mathcal{C})$. We can choose $N$ non-zero vectors $\mathcal{C}'=\{C_1',\cdots,C_N'\}$ with $C_i'$ in a sufficiently small neighborhood of $C_i$ for all $i=1,\cdots,N$ such that the following conditions are satisfied
	\begin{enumerate}
		\item 
		$\mathbf{w}(v;\mathcal{C})=\mathbf{w}(v;\mathcal{C}')$,

		\item
		the vectors $C_j'$ are not parallel to each other,

		\item
		$\mathcal{C}$ satisfies (I.1)-(I.3) if and only if $\mathcal{C}'$ satisfies (I.1)-(I.3).
	\end{enumerate}
    Therefore we can assume from the onset that the vectors $C_j$ are not parallel to each other.

	Fix an index $j=1,\cdots,N$. Take a negative vector $v\in V$ such that $(v,C_j)=0$ and $(v,C_i)\ne0$ for any $i\ne j$ (this is always possible since the $C_j$ are not parallel to each other). Then we can find another negative vector $w\in V$ in a sufficiently small open neighborhood of $v$ such that for any $s\in[0,1]$, the following conditions hold
	\begin{enumerate}
		\item
		the vector $\gamma_{w,v}(s)$ is a negative vector for any $s\in[0,1]$;

		\item
		$(\gamma_{w,v}(s),C_i)\ne0$ unless $i=j$ and $s=1$;

		\item
		for any $i\ne j-1,j$, $v^{\perp}\cap\gamma_i(]0,1[)=\emptyset$ if and only if $\gamma_{w,v}(s)^{\perp}\cap\gamma_i(]0,1[)=\emptyset$ for any $s\in[0,1]$;

		\item
		$\mathrm{sgn}(v,C_i)\mathrm{sgn}(\gamma_{w,v}(s),C_i)=1$ for $i=j-1,j+1$ and for any $s\in[0,1]$.
	\end{enumerate}
    Thus $\mathrm{sgn}(\gamma_{w,v}(s),C_i)\mathrm{sgn}(\gamma_{w,v}(s),C_{i+1})$ are all constant on $s\in[0,1]$ (for any $i\ne j-1,j$), we deduce that
    \[
    \mathrm{sgn}(\gamma_{w,v}(s),C_{j-1})\mathrm{sgn}(\gamma_{w,v}(s),C_j)+\mathrm{sgn}(\gamma_{w,v}(s),C_j)\mathrm{sgn}(\gamma_{w,v}(s),C_{j+1})
    \]
    is constant on $s\in[0,1]$. Setting $s=1$, we know that this map is equal to $0$. Now take $s\ne1$ and we must have $\mathrm{sgn}(\gamma_{w,v}(s),C_{j-1})\mathrm{sgn}(\gamma_{w,v}(s),C_{j+1})=-1$ and therefore $(v,C_{j-1})(v,C_{j+1})<0$.
    
    If $(C_j,C_j)<0$, we can write
    \begin{align*}
    	v=(1,0,\cdots),
    	&
    	\quad
    	C_j=(0,1,0,\cdots),
    	\\
    	C_{j-1}=(a,b,c,d,\cdots),
    	&
    	\quad
    	C_{j+1}=(a',b',c',d',\cdots).
    \end{align*}
    (I.2) gives
    \[
    a^2-c^2-d^2-\cdots>0,
    \quad
    (a')^2-(c')^2-(d')^2-\cdots>0.
    \]
    Since $(v,C_{j-1})(v,C_{j+1})=aa'<0$, we have $aa'-cc'-dd'-\cdots<0$ and thus (I.3) holds for $j$.

    If $(C_j,C_j)=0$, we can write
    \begin{align*}
    	v=(1,0,\cdots),
    	&
    	\quad
    	C_j=(0,1,1,0,\cdots),
    	\\
    	C_{j-1}=(a,b,b,0,\cdots)+D,
    	&
    	\quad
    	C_{j+1}=(a',b',b',0,\cdots)+D'
    \end{align*}
    where $D,D'\in\mathbb{R}(e_3,\cdots,e_{n+2})$. (I.1) gives
    \[
    a^2-(D,D)\ge0,
    \quad
    (a')^2-(D',D')\ge0.
    \]
    Combined with $(v,C_{j-1})(v,C_{j+1})=aa'<0$, we get that $(C_{j-1},C_{j+1})=-aa'+(D,D')\ge0$, and the equality holds if and only if $a^2=(D,D)$, $(a')^2=(D',D')$ and $D$ is parallel to $D'$. In particular, if the equality holds, we have $(C_{j-1},C_{j-1})=-a^2+(D,D)=0$ and similarly $(C_{j+1},C_{j+1})=0$, contradicting the assumption that no three consecutive vectors in $\mathcal{C}$ are all null vectors. Therefore we must have $(C_{j-1},C_{j+1})>0$.
\end{proof}

\begin{proof}[Proof of Theorem \ref{main theorem}(2)]
	The absolute convergence of $\theta_{\mu}(\tau)$ implies that the discrete-valued map $\Phi(x;\mathcal{C})$ is identically zero if $(x,x)<0$. Then we apply the above proposition.
\end{proof}

\begin{remark}
	The strategy of proof of \ref{main theorem} can be easily adapted to the case of indefinite theta series $\theta_{\mu}(\tau;\{C_1,C_2\})$ for an integral lattice $(L,(-,-))$ of signature $(n,1)$ mentioned in the introduction. In particular, for two negative vectors $C_1,C_2$ in $V$, if $\theta_{\mu}(\tau;\{C_1,C_2\})$ are termwise absolutely convergent, then $(C_1,C_2)<0$.
\end{remark}

\end{document}